\theoremstyle{plain}    
\newtheorem{thm}{Theorem}
\newtheorem{lemma}{Lemma}
\begin{document}
\title{Three Trichotomy Theorems}
\author{ Frank J. Palladino }
\address{Department of Mathematics, University of Rhode Island, Kingston, RI 02881;}
\email{frank@math.uri.edu}
\date{April 7, 2013}
\subjclass{39A10,39A11}
\keywords{difference equation, periodic convergence, global stability, periodic trichotomy}

\begin{abstract}
\noindent We study the following $k^{th}$ order rational
difference equation assuming nonnegative parameters and nonnegative initial conditions
$$x_{n}=\frac{\alpha+\sum_{i=1}^{k}\beta_{i}x_{n-i}}
{A+\sum_{j=1}^{k}B_{j}x_{n-j}},\quad n\in\mathbb{N}.$$ 
We develop a new periodic trichotomy result which unifies all currently known periodic trichotomy results regarding the above difference equation into three major families. The periodic trichotomy result presented in this article also contains as special cases some new examples of periodic trichotomy behavior.\newline
\end{abstract}
\maketitle

\section{Introduction}
A periodic trichotomy is a type of periodic bifurcation which occurs for certain rational difference equations, characterized by a three way split of the qualitative behavior depending on the selection of the parameters. Generally, the behavior of such a periodic trichotomy can be described as follows. In a region of parametric space every solution converges to an equilibrium solution. On the boundary of that region, every solution converges to a periodic solution of not necessarily prime period $p$ and there exist periodic solutions of prime period $p$, with $p$ depending on the underlying circumstances. Outside of that region of parametric space and its boundary, an unbounded solution may be constructed with the appropriate choice of initial condition. 

Many authors have contributed periodic trichotomy results for special cases of the general linear fractional rational difference equation
\begin{equation}\label{eq:lf}
x_n=\frac{\alpha+\sum^{k}_{i=1}\beta_{i}x_{n-i}}{A+\sum^{k}_{j=1}B_{j}x_{n-j}},\quad n\in\mathbb{N},
\end{equation}
see for example, [1-15] and [18-21].
We cannot stress enough the importance of this preceding work on periodic trichotomies. The subsequent three theorems in this article can be seen as the culmination of a long line of preceding work outlined in the previous citations.

All currently known periodic trichotomy results for Equation \eqref{eq:lf} can be organized into three major families described by the following three theorems. In this article, we give a streamlined and relatively short proof of these three theorems. To state these theorems we need the following notation $$I_{\beta}=\{i\in\{1,\dots, k\}|\beta_{i}>0\}\mbox{ and }I_{B}=\{j\in\{1,\dots, k\}|B_{j}>0\}.$$

\begin{thm} \label{thm:trichotomy2}
Consider the $k^{th}$ order rational difference equation,
\begin{equation} \label{eq:trichotomy1}
x_n=\frac{\sum^{k}_{i=1}\beta_{i}x_{n-i}}{A+\sum^{k}_{j=1}B_{j}x_{n-j}},\quad n\in\mathbb{N}.
\end{equation}
Assume nonnegative parameters and nonnegative initial conditions. Further
assume that $\sum^{k}_{i=1}\beta_{i}>0$, $A>0$, and that there does not exist
$j\in I_B$ so that $gcd(I_\beta) | j$. Under these
assumptions solutions of Equation \eqref{eq:trichotomy1} exhibit the following trichotomy behavior.
\begin{enumerate}[i.]
\item When $A>\sum^{k}_{i=1}\beta_{i}$, the unique equilibrium is globally asymptotically stable.
\item When $A=\sum^{k}_{i=1}\beta_{i}$, every solution converges to a periodic
solution of not necessarily prime period $gcd(I_{\beta})$, and there exist periodic solutions of prime period $gcd(I_{\beta})$.
\item When $A < \sum^{k}_{i=1}\beta_{i}$, unbounded solutions exist for some choice of initial
conditions.
\end{enumerate}
\end{thm}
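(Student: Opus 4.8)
The plan is to build all three cases on one structural observation. Write $S=\sum_{i=1}^{k}\beta_i$ and $d=\gcd(I_\beta)$. Every index $i\in I_\beta$ is a multiple of $d$, so the numerator of $x_n$ only couples indices $n-i\equiv n\pmod d$, whereas the hypothesis that no $j\in I_B$ is divisible by $d$ means the denominator only couples indices $n-j\not\equiv n\pmod d$. Hence if the initial data is supported on the sublattice $d\mathbb{Z}$ (all entries off $d\mathbb{Z}$ set to $0$), the denominator collapses to $A$ and the orbit stays supported on $d\mathbb{Z}$, where it obeys the \emph{linear} recursion $x_n=\frac1A\sum_{i\in I_\beta}\beta_i x_{n-i}$ whose coefficients sum to $S/A$. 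For case (i), $A>S$: since the denominator is at least $A$, one has $x_n\le (S/A)\max_{1\le m\le k}x_{n-m}$ with $S/A<1$, so the window maximum $W_n=\max\{x_{n-1},\dots,x_{n-k}\}$ is non-increasing, the orbit is bounded, and $L=\limsup x_n$ satisfies $L\le (S/A)L$, forcing $L=0$; as $0$ is the only equilibrium this gives global asymptotic stability. For case (iii), $A<S$: I would reindex the sublattice-supported orbit as $y_t=\frac1A\sum_{i\in I_\beta}\beta_i\,y_{t-i/d}$, a linear recursion with nonnegative coefficients summing to $S/A>1$; its companion matrix is nonnegative with spectral radius the unique positive root $\lambda_0$ of $\sum_{i\in I_\beta}(\beta_i/A)\lambda^{-i/d}=1$, and $\lambda_0>1$ since the left side exceeds $1$ at $\lambda=1$. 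From a strictly positive initial vector, Perron--Frobenius gives growth at least like $\lambda_0^{\,t}\to\infty$, producing the required unbounded orbit.

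Case (ii) is where $A=S$, so $S/A=1$ and the window maximum is still non-increasing, giving boundedness; moreover within each residue class modulo $d$ the inequality $x_n\le\max_{i\in I_\beta}x_{n-i}$ (all those indices lying in the same class) shows the within-class window maximum decreases to a limit $L_\rho$, so $\limsup_{n\equiv\rho}x_n=L_\rho$. Existence of prime period-$d$ solutions is immediate from the reduction: the sublattice-supported constant profile with value $c>0$ on $d\mathbb{Z}$ and $0$ elsewhere satisfies $x_n=Sc/A=c$ on $d\mathbb{Z}$ and $x_n=0$ off it, hence is a solution; its support is exactly $d\mathbb{Z}$, so no proper divisor of $d$ can be a period and the prime period is exactly $d$.

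For the convergence claim I would use the full-limiting-solution machinery: by boundedness, shifts of the orbit subconverge coordinatewise to full solutions $\{X_m\}_{m\in\mathbb{Z}}$ with $0\le X_m\le L_{\rho+m}$. The engine is an extremal argument. If $X_0=L_\rho>0$ is a class maximum, then writing $X_0=\frac{\sum_i\beta_i X_{-i}}{A+\sum_j B_j X_{-j}}$, the bounds $\sum_i\beta_i X_{-i}\le S L_\rho=A X_0$ and denominator $\ge A$ force the denominator to equal $A$ and the numerator to be maximal, i.e.\ $X_{-j}=0$ for every $j\in I_B$ and $X_{-i}=L_\rho$ for every $i\in I_\beta$. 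Iterating and using that the numerical semigroup $\langle I_\beta\rangle$ contains every sufficiently large multiple of $d$ propagates the value $L_\rho$ across the whole class and pins the denominator classes to $0$; this is precisely where the hypothesis that no $j\in I_B$ is divisible by $d$ is essential, as it places the forced zeros in classes different from the support. One then identifies the common limiting profile $\phi$ with $\phi(\rho)=L_\rho$, verifies it is a period-$d$ solution, and concludes $x_n\to\phi(n\bmod d)$.

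The hard part will be upgrading control of the $\limsup$ to genuine convergence. The non-expansiveness coming from denominator $\ge A$ makes the $\limsup$ and the extremal argument close cleanly, but it yields no matching lower bound: the same denominator that tames $\limsup$ could a priori depress $\liminf$ within a class, and the $\liminf$ version of the extremal argument fails because the denominator is not controlled from below. The crux is therefore to show that the values in the denominator classes $\rho-j$ with $j\in I_B$ actually tend to $0$, so that each class asymptotically obeys the pure linear recursion $x_n=\frac1A\sum_{i\in I_\beta}\beta_i x_{n-i}$; since $\gcd(I_\beta)=d$ makes that recursion primitive with $1$ a simple dominant eigenvalue, the class then converges to a constant and $\liminf_{n\equiv\rho}x_n=L_\rho$. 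Making this vanishing rigorous --- equivalently, proving that every full limiting solution coincides with the single profile $\phi$ --- is the main obstacle I anticipate.
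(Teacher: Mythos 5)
Your cases i and iii, and your construction of prime period $\gcd(I_\beta)$ solutions, are sound and essentially the paper's own route: the paper's Lemma 5 is exactly your sublattice-support reduction (initial conditions vanishing off $d\mathbb{Z}$ kill every denominator term because no $j\in I_B$ is divisible by $d$, leaving the linear equation $x_n=\frac{1}{A}\sum_{i}\beta_i x_{n-i}$), and the paper uses it in the same two places you do, for the prime-period solution in case ii and for unboundedness in case iii. The problem is the convergence claim in case ii, and you have named it yourself: you never prove that the class liminf equals the class limsup, i.e.\ that solutions actually converge, and your proposal ends by declaring this ``the main obstacle.'' That is not a removable technicality; it is the entire content of the middle case of the trichotomy, and the extremal/full-limiting-solution scheme you sketch does not close it, for precisely the reason you identify: the extremal argument only describes limiting solutions through points where the class \emph{maximum} is attained, and gives no information about limiting solutions passing near the class liminf, where the denominator admits no useful control.

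The paper closes this gap (its Lemma 3) with an argument whose key feature is that no lower control of the denominator is ever needed, so your worry on that score is a red herring. Let $y^{a}_{m}$ be your within-class window maximum, decreasing to $y^{a}_{*}$. Suppose toward a contradiction that $\liminf_{n\equiv a} x_n = I_a < y^{a}_{*}$, and pick class times $m_b$ with $x_{m_b d+a}\to I_a$. The recursion gives the upper bound $x_n\le \frac{1}{A}\sum_i \beta_i x_{n-i}$, a sub-weighted-average (weights summing to $\sum_i\beta_i/A\le 1$) of earlier values \emph{in the same class}; hence if one of those earlier values has $\limsup_b$ strictly below $y^{a}_{*}$ while the others are $\le y^{a}_{*}$, then $x_n$ inherits a $\limsup_b$ strictly below $y^{a}_{*}$. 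Induction on the number of summands then shows $\limsup_b x_{m_b d+a+\eta} < y^{a}_{*}$ for every $\eta$ in the numerical semigroup generated by $I_\beta$: the deficiency at the liminf times propagates \emph{forward}, never backward. By the Frobenius number $N_f$ of $\{i/d : i\in I_\beta\}$, every sufficiently large multiple of $d$ lies in this semigroup, so for a large fixed offset the entire finite window defining $y^{a}_{m_b+N_f+k}$ consists of terms each with $\limsup_b$ strictly below $y^{a}_{*}$; hence $\limsup_b y^{a}_{m_b+N_f+k} < y^{a}_{*}$, contradicting $y^{a}_{m}\downarrow y^{a}_{*}$. This forces $I_a=y^{a}_{*}$, so each class converges and the limit is periodic of period $d$. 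You had all the ingredients (monotone window maxima, the numerical semigroup, the Frobenius number) but deployed them on the wrong side: the missing idea is to propagate the liminf deficiency forward through the semigroup and contradict the monotone convergence of the window maxima, rather than to pin down full limiting solutions or to prove that the denominator classes vanish.
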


\begin{thm} \label{thm:trichotomy1}
Consider the $k^{th}$ order rational difference equation,
\begin{equation} \label{eq:trichotomy2}
x_n=\frac{\alpha+\sum^{k}_{i=1}\beta_{i}x_{n-i}}{A+\sum^{k}_{j=1}B_{j}x_{n-j}},\quad n\in\mathbb{N}.
\end{equation}
Assume nonnegative parameters and nonnegative initial conditions so that the denominator is nonvanishing. Further
assume that $\sum^{k}_{j=1}B_{j}>0$ and $\alpha>0$. Moreover assume $2gcd(I_\beta \cup I_B)|i$ for all $i\in I_{\beta}$ and $2gcd(I_\beta \cup I_B)|(j+gcd(I_\beta \cup I_B))$ for all $j\in I_B$. Under these
assumptions Equation \eqref{eq:trichotomy2} exhibits the following trichotomy behavior.
\begin{enumerate}[i.]
\item When $A>\sum^{k}_{i=1}\beta_{i}$, the unique equilibrium is globally asymptotically stable.
\item When $A=\sum^{k}_{i=1}\beta_{i}$, every solution converges to a periodic solution of not necessarily prime period $2gcd(I_{\beta}\cup I_{B})$, and there exist periodic solutions of prime period $2gcd(I_{\beta}\cup I_{B})$.
\item When $A < \sum^{k}_{i=1}\beta_{i}$, unbounded solutions exist for some choice of initial
conditions.
\end{enumerate}
\end{thm}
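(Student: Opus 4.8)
The plan is to first record the structural consequences of the divisibility hypotheses. Writing $d=\gcd(I_\beta\cup I_B)$, $\bar\beta=\sum_{i=1}^{k}\beta_i$, and $\bar B=\sum_{j=1}^{k}B_j$, the equilibrium equation $\bar B\,\bar x^2+(A-\bar\beta)\bar x-\alpha=0$ has (since $\alpha,\bar B>0$) a unique nonnegative root, so the equilibrium is well defined and positive. The key reduction is to index by residue modulo $2d$: the condition $2d\mid i$ for $i\in I_\beta$ means every numerator lag stays inside the residue class of $x_n$, while $2d\mid(j+d)$ for $j\in I_B$ means every denominator lag sends the class $r$ to the class $r+d$. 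Hence the residue classes pair up as $(r,r+d)$ for $r=0,\dots,d-1$ into $d$ \emph{independent} coupled subsystems, and I would work inside one such pair $(u,v)$, where $u$ (class $r$) uses its own past in the numerator and $v$'s past in the denominator, and symmetrically for $v$. A direct computation shows the period-$2d$ equilibria correspond exactly to pairs with $\phi\psi=\alpha/\bar B$, a one-parameter family; this already pins down the form of the conclusion in case (ii) and, taking $\phi\neq\psi$ and copying one pair across all classes, exhibits a solution of prime period $2d$.

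For part (i) ($A>\bar\beta$) the divisibility plays no role and I would argue globally. Iterating the crude bound $x_n\le(\alpha+\bar\beta\max_{\text{past}})/A$, a contraction since $\bar\beta/A<1$, yields boundedness, and $\alpha>0$ forces $\liminf x_n>0$. Writing $S=\limsup x_n$ and $I=\liminf x_n$ and evaluating along extremal subsequences gives $S\le(\alpha+\bar\beta S)/(A+\bar B I)$ and $I\ge(\alpha+\bar\beta I)/(A+\bar B S)$; subtracting the cleared forms, the $\bar B SI$ terms cancel and leave $(S-I)(A-\bar\beta)\le 0$, so $S=I=\bar x$. For part (iii) ($A<\bar\beta$) I would use the decoupling: inside one pair, choose initial data with class $r$ large and class $r+d$ tiny. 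The class-$r$ update is then $\approx(\bar\beta/A)\cdot(\text{class }r)$ with $\bar\beta/A>1$, while class $r+d$ is driven toward $0$; an induction maintaining ``class $r$ increasing, class $r+d$ decreasing'' gives a solution unbounded along class $r$, and independence of the pairs guarantees no interference.

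The heart is part (ii), $A=\bar\beta$. Multiplying the equation by its denominator and using $A x_n=\sum_i\beta_i x_n$ yields the balance identity
$$\sum_{j}B_j\,x_n x_{n-j}=\alpha+\sum_i\beta_i\,(x_{n-i}-x_n),$$
in which the left side is a weighted sum of cross-pair products and the right side a within-class telescoping difference. In the single-lag base case $u_m=(\alpha+\bar\beta u_{m-1})/(\bar\beta+\bar B v_{m-1})$, $v_m=(\alpha+\bar\beta v_{m-1})/(\bar\beta+\bar B u_m)$ this identity collapses to an exact recursion for the product $P_m=u_m v_m$, namely
$$P_m-\tfrac{\alpha}{\bar B}=\frac{\bar\beta^{2}}{(\bar\beta+\bar B u_m)(\bar\beta+\bar B v_{m-1})}\Bigl(P_{m-1}-\tfrac{\alpha}{\bar B}\Bigr),$$
whose multiplier lies in $(0,1)$. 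Thus $P_m-\alpha/\bar B$ never changes sign and decreases in magnitude; the constant sign makes $u_m$ and $v_m$ each monotone, the product bound forces boundedness, and monotone$+$bounded gives $u_m\to\phi$, $v_m\to\psi$ with $\phi\psi=\alpha/\bar B$. I would take this product contraction as the engine and extend it to the general pair via full limiting sequences: once boundedness is in hand, the per-class limsup/liminf inequalities give $S_r I_{r+d}=I_r S_{r+d}=\alpha/\bar B$, and any full limiting sequence attaining a class extremum forces, by equality in those inequalities, its numerator lags to the extreme value and its denominator lags to the opposite extreme; propagating this forcing along the numerical semigroup generated by $I_\beta\cup I_B$ (whose gcd is $d$) fills an entire class and shows the limit set consists of the period-$2d$ orbits $\phi\psi=\alpha/\bar B$.

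The main obstacle, and where I expect to spend the real effort, is exactly this case (ii), in two linked places: establishing boundedness on the critical line $A=\bar\beta$ (where the naive iteration only gives linear growth, so one must exploit that a large class forces its partner small, keeping the cross-products near $\alpha/\bar B$), and upgrading the identities $S_rI_{r+d}=I_rS_{r+d}=\alpha/\bar B$ to genuine convergence, since these alone are consistent with persistent oscillation between the two period-$2d$ orbits $(S_r,I_{r+d})$ and $(I_r,S_{r+d})$. The product contraction settles both at once in the base case, and I expect the general higher-order case to require combining the balance identity's telescoping with the gcd-propagation of full limiting sequences to rule out $S_r>I_r$; making that last step airtight for arbitrarily many lags is the delicate point.
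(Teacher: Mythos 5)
Your cases (i) and (iii) are fine in outline: case (i) essentially reproduces the paper's Lemma~\ref{lemma:converge} (boundedness plus the limsup/liminf cancellation), and case (iii), which the paper simply quotes from the Lugo--Palladino unboundedness theorem, can plausibly be carried out by the direct construction you sketch. The genuine gap is exactly where you admit you expect to spend the effort: case (ii) for general lag sets. Your product-contraction identity is correct and elegant for the single-pair, single-lag recursion $u_m,v_m$ (the factor $\bar\beta^{2}/((\bar\beta+\bar B u_m)(\bar\beta+\bar B v_{m-1}))$ does come out as you claim), but it does not survive the passage to several lags, and what you offer in its place is not a proof. The identities $S_rI_{r+d}=I_rS_{r+d}=\alpha/\bar B$ do follow once boundedness and persistence are known, but, as you yourself note, they are consistent with a solution oscillating forever between the two period-$2d$ orbits $(S_r,I_{r+d})$ and $(I_r,S_{r+d})$; and boundedness itself is never established, since on the critical line $A=\bar\beta$ the naive iteration gives only linear growth. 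Both of the ``linked places'' you flag therefore remain open, and they are the entire content of the theorem.

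The paper closes both at once with a single device that is absent from your proposal. Writing $g=\gcd(I_\beta\cup I_B)$ and $\rho=\lfloor k/2g\rfloor$, it defines for each residue class $a$ the envelope
$$y^{a}_{m}=\max_{\ell=1,\dots,\rho}\left(x_{2g(m-\ell)+a},\ \frac{\alpha}{x_{2g(m-\ell)+a-g}\sum_{j\in I_{B}}B_{j}},\ \frac{\alpha}{x_{2gm+a-g}\sum_{j\in I_{B}}B_{j}}\right),$$
which mixes the values of class $a$ with the conjugate quantities $\alpha/(x\sum_{j\in I_B}B_j)$ taken from the paired class $a-g$. Because $A=\sum_i\beta_i$, the two one-step inequalities $x_{2gm+a}\le y^{a}_{m}$ and $x_{2gm+a+g}\ge \alpha/(y^{a}_{m}\sum_{j\in I_B}B_j)$ make $y^{a}_{m}$ monotone decreasing; this yields boundedness of each class (and a positive lower bound for its partner) for free, together with convergence of the envelope. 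The remaining step --- showing that $\liminf x_{2gm+a}$ cannot sit strictly below the envelope's limit --- is done by propagating a deficit along sums of elements of $I_\beta$ and even-length sums of elements of $I_B$ (odd-length sums for the conjugate quantities), then invoking the Frobenius number of $\{i/2g\,:\,i\in I_\beta\}\cup\{(i+j)/2g\,:\,i,j\in I_B\}$ to conclude the deficit fills an entire window, contradicting monotonicity of the envelope. This is precisely the ``gcd-propagation'' you gesture at, but it is carried out against the monotone envelope rather than against limsup/liminf identities, and that is what makes it close; without this mechanism (or an equivalent one), your case (ii) is an outline rather than a proof.
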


\begin{thm} \label{thm:trichotomy3}
Consider the $k^{th}$ order rational difference equation,
\begin{equation} \label{eq:trichotomy3}
x_n=\frac{\alpha+\sum^{k}_{i=1}\beta_{2i}x_{n-2i}+x_{n-\ell}}{A+x_{n-\ell}},\quad n\in\mathbb{N}.
\end{equation}
Assume nonnegative parameters, positive initial conditions and that $\ell$ is odd. Under these
assumptions Equation \eqref{eq:trichotomy3} exhibits the following trichotomy behavior.
\begin{enumerate}[i.]
\item When $A+1>\sum^{k}_{i=1}\beta_{2i}$, every solution converges to an equilibrium.
\item When $A+1=\sum^{k}_{i=1}\beta_{2i}$, every solution converges to a periodic solution of not necessarily prime period $2gcd(I_{\beta})$, and there exist periodic solutions of prime period $2gcd(I_{\beta})$.
\item When $A+1 < \sum^{k}_{i=1}\beta_{2i}$, unbounded solutions exist for some choice of initial
conditions.
\end{enumerate}
\end{thm}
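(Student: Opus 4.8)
The plan is to reduce Theorem~\ref{thm:trichotomy3} to Theorem~\ref{thm:trichotomy1} by the affine change of variable $v_n = x_n - 1$, which is the feature that places this equation in the same ``family''. Writing $S = \sum_{i=1}^{k}\beta_{2i}$ and substituting $x_m = v_m + 1$ into Equation~\eqref{eq:trichotomy3}, the numerator becomes $(\alpha + S + 1) + \sum_{i=1}^{k}\beta_{2i}v_{n-2i} + v_{n-\ell}$ and the denominator becomes $(A+1) + v_{n-\ell}$; since $v_n = x_n-1$ equals (numerator $-$ denominator)/denominator, the $v_{n-\ell}$ contributions cancel in the new numerator and I obtain
$$v_n = \frac{(\alpha + S - A) + \sum_{i=1}^{k}\beta_{2i}\,v_{n-2i}}{(A+1) + v_{n-\ell}}.$$
This is exactly an instance of Equation~\eqref{eq:trichotomy2} with parameters $\tilde A = A+1$ and $\tilde\alpha = \alpha + S - A$, with numerator lags equal to the even lags $\{2i:\beta_{2i}>0\}$ and a single denominator lag at the odd position $\ell$. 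Equilibria correspond under $\bar x = \bar v + 1$, the threshold $A+1 = S$ is precisely $\tilde A = S$, and $\sum_j B_j = 1 > 0$; in the boundary case $\tilde\alpha = \alpha+1 > 0$.

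Next I would verify that the divisibility hypotheses of Theorem~\ref{thm:trichotomy1} hold for this $v$-equation. Put $d = \gcd(I_\beta)$; since $\ell\in I_\beta$ is odd, $d$ is odd. For each active even lag $2i$ we have $d\mid 2i$ (as $d$ divides every element of $I_\beta$), hence $d\mid i$ because $d$ is odd, so $2d\mid 2i$; thus the numerator condition holds. For the denominator lag $\ell$, write $\ell = dm$ with $m$ odd, so $\ell + d = d(m+1)$ with $m+1$ even, giving $2d\mid(\ell+d)$. Finally $\tilde I_\beta\cup\tilde I_B = I_\beta$, so the period $2\gcd(\tilde I_\beta\cup\tilde I_B)$ produced by Theorem~\ref{thm:trichotomy1}(ii) equals $2\gcd(I_\beta)$, matching the statement. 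Consequently Theorem~\ref{thm:trichotomy1} applies verbatim to any solution for which $v_n\ge 0$, i.e.\ $x_n\ge 1$.

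Case (iii) is then immediate: choosing initial data with $x_n\ge 1$ makes the corresponding $v$-solution nonnegative, and since $\tilde A = A+1 < S$, Theorem~\ref{thm:trichotomy1}(iii) supplies an unbounded $v$-solution, hence an unbounded solution of \eqref{eq:trichotomy3}. Case (i) I would handle directly rather than by reduction, because there the equilibrium may fall below $1$ and $\tilde\alpha = \alpha+S-A$ may be nonpositive (when $A\ge S+\alpha$), putting the $v$-equation outside the hypotheses of Theorem~\ref{thm:trichotomy1}; but this is the easy convergence regime, where the effective multiplier $S/(A+1)<1$ forces boundedness and a routine $\limsup/\liminf$ argument on $x_n$ yields convergence to an equilibrium.

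The substantive case is (ii). Here the reduction does the work once I know the solution eventually satisfies $x_n\ge 1$, so the content is a \emph{persistence} statement, $\liminf_{n}x_n > 1$. Assuming boundedness, I would extract extremal subsequences and, using the critical identity $\sum_{i=1}^{k}\beta_{2i}(v_n-v_{n-2i}) + v_n v_{n-\ell} = 1+\alpha$, derive $U(L-1)\le L+\alpha$ and $L(U-1)\ge U+\alpha$ for $U=\limsup x_n,\ L=\liminf x_n$; these force $(U-1)(L-1)=1+\alpha$, which in particular gives $L>1$. Then a tail of the solution is a genuine nonnegative solution of \eqref{eq:trichotomy2}, and Theorem~\ref{thm:trichotomy1}(ii) transfers convergence to a period-$2\gcd(I_\beta)$ solution back to $x_n$. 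For the existence of prime-period solutions I would also give the self-contained construction: a period-$p$ solution with $p=2\gcd(I_\beta)$ makes every $v_n-v_{n-2i}$ vanish, so it must satisfy $v_n v_{n-\ell}=1+\alpha$; because $\ell$ is odd and $p$ is even, the shift $n\mapsto n-\ell$ on $\mathbb{Z}/p\mathbb{Z}$ has every orbit of size exactly $2$, and assigning the paired values $a$ and $(1+\alpha)/a$ on each orbit gives a solution of prime period $p$ whenever $a\ne\sqrt{1+\alpha}$. The hard part will be establishing boundedness in this critical case: since the effective growth factor is exactly $1$, finiteness of $\limsup x_n$ is not automatic and must be obtained by a careful contradiction argument tracing large even-indexed values back through the parity-coupled denominator feedback — this is where I expect the real difficulty to lie.
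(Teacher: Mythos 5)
Your reduction $v_n=x_n-1$, the verification of the divisibility hypotheses of Theorem \ref{thm:trichotomy1}, the treatment of case (iii), and the prime-period-$2\gcd(I_\beta)$ construction are all correct, and that reduction is indeed the paper's own device (its Equation \eqref{eq:change1}). The gaps are exactly at the two places you compress. First, case (i) is not ``the easy convergence regime,'' and the multiplier $S/(A+1)$ (writing $S=\sum_{i=1}^{k}\beta_{2i}$) is not available a priori: the denominator of \eqref{eq:trichotomy3} is $A+x_{n-\ell}$, not $A+1$, so without already knowing $x_{n-\ell}\ge 1$ --- which is the persistence question itself --- the only a priori bound is $x_n\le 1+\frac{\alpha+S\max_i x_{n-2i}}{A}$, whose multiplier $S/A$ exceeds $1$ throughout the sub-regime $A\le S<A+1$ that case (i) permits. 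Likewise the ``routine'' limsup/liminf computation, $U(A+L)\le\alpha+(S+1)U$ and $L(A+U)\ge\alpha+(S+1)L$ with $U=\limsup x_n$, $L=\liminf x_n$, only yields $(A-S-1)(U-L)\le 0$, which is conclusive precisely when $A>S+1$ --- a strictly smaller region than case (i)'s $A>S-1$. This is why the paper spends most of its proof here: it first eliminates $\alpha$ by a second, nonobvious substitution $w_n=(x_n+r)/(1+r)$, with $r$ the positive root of $t^{2}+(S+1-A)t-\alpha$, and then, for the reduced equation \eqref{eq:red}, runs three separate arguments: an invariant-interval/monotonicity argument on $[0,1]$ (with a further $\delta$-splitting) when $S<A$, the contraction estimate $|x_n-1|\le\frac{\sum_{i}\beta_{2i}|x_{n-2i}-1|}{S+L}$ when $S=A$, and persistence followed by Theorem \ref{thm:trichotomy1}(i) when $A<S<A+1$.

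Second, in case (ii) your persistence step is false as stated when $\alpha=0$ (a value your approach never removes): the extremal-subsequence identity cannot rule out $L=\liminf x_n=0$, since $0$ is an equilibrium of the $\alpha=0$ equation and $L=0$ satisfies $(L-1)^{2}\ge 1+\alpha$; along the liminf subsequence one only gets $w\le-(1+\alpha)$ for a subsequential limit $w\ge L-1=-1$, which is perfectly consistent when $\alpha=0$. Any proof of $L>1$ must genuinely use positivity of the initial conditions, which your argument never does. The paper uses it through Lemmas \ref{l:ineq} and \ref{lemma:bounds}: when $S\ge A$, $x_n\ge\min\left(\frac{\sum_{i}\beta_{2i}x_{n-2i}}{A},1\right)\ge\min_i(x_{n-2i},1)$, hence $x_n\ge\min(\mbox{initial conditions},1)>0$ for all $n$, from which $\liminf x_n\ge 1$ and then $x_n>1$ eventually. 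Note also that your logical order is inverted: persistence is proved first by this lower-bound induction, with no boundedness hypothesis, and boundedness then never needs to be proved separately at all --- once a tail of the solution satisfies \eqref{eq:trichotomy2} with nonnegative values, Theorem \ref{thm:trichotomy1}(ii) supplies its own bounds via its monotone sequences $y^{a}_{m}$. So the step you flagged as ``where I expect the real difficulty to lie'' (boundedness at criticality) is not where the difficulty lies; the unrepaired hole is persistence at $\alpha=0$, and it is filled by the Lemma \ref{l:ineq}/Lemma \ref{lemma:bounds} mechanism, not by limsup/liminf identities.
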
  

Theorems \ref{thm:trichotomy2} and \ref{thm:trichotomy1} first appeared in \cite{fptri}. Theorem \ref{thm:trichotomy3} is new to this article.
 
\section{Details}

In this section we will prove Theorems 1-3. We begin with the following lemmas.
\begin{lemma}\label{l:ineq}
For nonnegative $a,b$ and positive $c,d$,
$$\min\left(\frac{a}{c},\frac{b}{d}\right)\leq\frac{a+b}{c+d}\leq\max\left(\frac{a}{c},\frac{b}{d}\right).$$
\end{lemma}
\begin{proof}
Multiplying through by $c+d$ yields,
$$\min\left(a+\frac{ad}{c},b+\frac{bc}{d}\right)\leq a+b\leq\max\left(a+\frac{ad}{c},b+\frac{bc}{d}\right).$$
Now either $ad\geq bc$ or $ad\leq bc$, in both cases the above string of inequalies is true.
\end{proof}

\begin{lemma}\label{lemma:converge}
Consider the $k^{th}$ order rational difference equation,
$$x_n=\frac{\alpha +\sum^{k}_{i=1}\beta_{i}x_{n-i}}{A+\sum^{k}_{j=1}B_{j}x_{n-j}},\quad n\in\mathbb{N}.$$
Assume nonnegative parameters, nonnegative initial conditions, and
assume that $A> \sum^{k}_{i=1}\beta_{i}$, then every solution converges to an equilibrium.
\end{lemma}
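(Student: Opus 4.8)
The plan is to run a $\limsup$/$\liminf$ argument and use the hypothesis $A>\sum\beta_i$ to force the two quantities to coincide. Throughout write $S=\sum_{i=1}^{k}\beta_{i}$ and $T=\sum_{j=1}^{k}B_{j}$, so that $S<A$.

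First I would establish boundedness. Since every term $B_{j}x_{n-j}$ is nonnegative, the denominator is at least $A>0$, whence $x_{n}\le(\alpha+\sum_{i}\beta_{i}x_{n-i})/A\le \alpha/A+(S/A)\max_{1\le i\le k}x_{n-i}$. Because $S/A<1$, a routine induction shows every solution is bounded above by $B^{*}:=\max\{x_{1-k},\dots,x_{0},\alpha/(A-S)\}$, and nonnegativity of the parameters and initial data gives the lower bound $0$. Hence $U:=\limsup_{n}x_{n}$ and $L:=\liminf_{n}x_{n}$ are finite and nonnegative.

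Next, fix $\varepsilon>0$. By definition of $U$ and $L$, for all large $n$ we have $x_{n-i}\le U+\varepsilon$ for each $i$ and $x_{n-j}\ge L-\varepsilon$ for each $j$; since only finitely many lags occur, ``eventually'' is uniform. Bounding the numerator above and the denominator below in the defining recurrence gives $x_{n}\le(\alpha+S(U+\varepsilon))/(A+T(L-\varepsilon))$ for all large $n$, the denominator being positive once $\varepsilon$ is small. Passing to $\limsup$ and then letting $\varepsilon\to0$ yields $U\le(\alpha+SU)/(A+TL)$. The symmetric estimate, bounding the numerator below and the denominator above, gives $L\ge(\alpha+SL)/(A+TU)$.

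Finally I would clear denominators: the first inequality becomes $U(A-S)+TUL\le\alpha$ and the second becomes $L(A-S)+TUL\ge\alpha$. The crucial point, and the \emph{main obstacle}, is precisely the handling of the denominator coefficients $B_{j}$: they produce a cross term in each estimate, but both cross terms are exactly $TUL$, so combining the two inequalities cancels them and leaves $U(A-S)\le L(A-S)$. Since $A-S>0$ this forces $U\le L$, and as $L\le U$ always holds we conclude $U=L$, i.e.\ $x_{n}$ converges. Passing to the limit in the recurrence is legitimate because the denominator stays $\ge A>0$, and the common value then satisfies the equilibrium equation, so the solution converges to an equilibrium.
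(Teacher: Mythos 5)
Your proof is correct and follows essentially the same route as the paper: boundedness via the estimate $x_{n}\le \alpha/A+(S/A)\max_{i}x_{n-i}$, then the two $\limsup$/$\liminf$ inequalities whose cross terms $TUL$ cancel upon clearing denominators, forcing $U=L$ since $A-S>0$. The only cosmetic difference is that you establish boundedness by a direct induction, whereas the paper cites an external comparison theorem (Theorem 3 of its reference \cite{f1}) for that step.
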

\begin{proof}
We have $$x_{n}\leq \frac{\alpha}{A}+\frac{\sum_{i\in I_{\beta}}\beta_{i}x_{n-i}}{A}\leq \frac{\alpha}{A}+\left(\frac{\sum_{i\in I_{\beta}}\beta_{i}}{A}\right) max_{i\in I_{\beta}}x_{n-i},\;\;n\in\mathbb{N}.$$ This implies that every solution is bounded by Theorem 3 in \cite{f1}. Let $S=\limsup x_{n}$ and $I=\liminf x_{n}$. Then we have, $$S\leq \frac{\alpha + \sum_{i\in I_{\beta}}\beta_{i}S}{A+\sum_{j\in I_{B}}B_{j}I}\mbox{ and }I\geq \frac{\alpha + \sum_{i\in I_{\beta}}\beta_{i}I}{A+\sum_{j\in I_{B}}B_{j}S}.$$ So $0\leq \left(\sum_{i\in I_{\beta}}\beta_{i}-A\right)(S-I)$, which forces $S=I$.
\end{proof}

\begin{lemma}\label{lemma:trichotomy1}

Consider the $k^{th}$ order rational difference equation,
$$x_n=\frac{\sum^{k}_{i=1}\beta_{i}x_{n-i}}{A+\sum^{k}_{j=1}B_{j}x_{n-j}},\quad n\in\mathbb{N}.$$
Assume nonnegative parameters, nonnegative initial conditions, and
assume that $A\geq \sum^{k}_{i=1}\beta_{i}>0$, then every solution converges to a periodic solution of not necessarily prime period $gcd(I_{\beta})$.
\end{lemma}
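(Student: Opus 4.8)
The plan is to dispose of the strict inequality immediately and put all the work into the boundary case $A=\sum_i\beta_i$. When $A>\sum_i\beta_i$, Lemma~\ref{lemma:converge} already gives convergence to the equilibrium, which is a constant and hence trivially a solution of period $\gcd(I_\beta)$; so from now on I assume $A=\sum_{i\in I_\beta}\beta_i$ and write $d=\gcd(I_\beta)$. The first thing I would record is a one-sided contraction of the maximum: since $\sum_i\beta_i\le A$ and the denominator is at least $A$, Lemma~\ref{l:ineq} gives $x_n\le \frac{\sum_i\beta_i x_{n-i}}{A}\le \max_{i\in I_\beta}x_{n-i}$. Hence, with $L=\max I_\beta$, the windowed maximum $M_n=\max_{1\le t\le L}x_{n-t}$ is nonincreasing; in particular every solution is bounded and $\limsup x_n=\lim M_n$. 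Because every $i\in I_\beta$ is a multiple of $d$, this inequality holds \emph{inside each residue class} modulo $d$, so for each $r$ the windowed maximum of $(x_{dm+r})_m$ is nonincreasing and converges to $S_r:=\limsup_{m}x_{dm+r}$.

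Second, I would extract a summability relation by telescoping. Clearing denominators and using $A=\sum_i\beta_i$ gives the identity $\sum_{i\in I_\beta}\beta_i(x_{n-i}-x_n)=x_n\sum_{j\in I_B}B_jx_{n-j}\ge 0$. Summing over $n$, each inner sum $\sum_n(x_{n-i}-x_n)$ telescopes and is bounded in absolute value by $i\cdot\sup_n x_n$, so the nonnegative series $\sum_n x_n\sum_{j}B_jx_{n-j}$ has bounded partial sums and converges. Consequently $x_nx_{n-j}\to0$ for every $j\in I_B$, and the averaging defect vanishes: $\sum_{i\in I_\beta}\beta_i(x_{n-i}-x_n)\to0$.

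Third, I would pass to a full limiting sequence. By boundedness a diagonal extraction produces a bi-infinite solution $(y_n)_{n\in\mathbb Z}$ arising as a limit of shifts; it inherits $y_ny_{n-j}=0$ for $j\in I_B$ and the exact averaging identity $A y_n=\sum_{i\in I_\beta}\beta_i y_{n-i}$, which writes $y_n$ as a convex combination of the $y_{n-i}$, $i\in I_\beta$. On the unit circle $|\sum_i\beta_i\lambda^{-i}|\le\sum_i\beta_i=A$ with equality only when $\lambda^i=1$ for all $i\in I_\beta$, i.e. when $\lambda$ is a $d$-th root of unity; since $\sum_i i\beta_i>0$ these roots are simple, and every other characteristic root lies strictly off the circle, so bi-infinite boundedness kills all remaining modes. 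Thus $(y_n)$ is a period-$d$ sequence (constant on each residue class modulo $d$), and the constraints $y_ny_{n-j}=0$ say its values in classes that are $I_B$-apart cannot both be positive.

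The remaining step — upgrading this qualitative limiting picture to honest convergence of each subsequence $(x_{dm+r})_m$ — is where I expect the main obstacle. The monotone per-class windowed maximum already pins the limit superior at $S_r$, so the task is to match the limit inferior, i.e. to rule out persistent dips. Traced back through the averaging identity, a sustained dip must ultimately be sourced by a large denominator, that is by some $x_{n-j}$ ($j\in I_B$) bounded away from $0$; but $x_nx_{n-j}\to0$ forbids this from coexisting with $x_n$ near $S_r$. Concretely, I would take a subsequence realizing $x_{dm+r}\to S_r$, use $x_nx_{n-j}\to0$ to drive the denominator to $A$ along it, and then use the averaging identity with $\max$-control to propagate the value $S_r$ backward to $x_{dm+r-i}$ for each $i\in I_\beta$; iterating, $S_r$ propagates to every offset in the numerical semigroup generated by $\{i/d:i\in I_\beta\}$, which is cofinite since that set has gcd $1$. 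Interlacing this backward propagation from the high points with the near-maximal term guaranteed in every window, and feeding it into the recurrence at a would-be low index, should force the low value up to $S_r$ or collapse the whole class to $0$. Making the interlacing quantitative — so that the propagated high value and the low value land at a \emph{common} index rather than only at fixed offsets — is the crux; here one must exploit the rigidity of the monotone per-class windowed maximum rather than any linear contraction (the period-$d$ modes do not contract), after which each class converges and $x_n-p_{n\bmod d}\to0$ for the period-$d$ pattern $p$ given by $p_r=S_r$.
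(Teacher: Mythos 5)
Everything through your third paragraph is sound: the per-class windowed maximum argument is exactly the paper's opening step, and your telescoping identity (summability of $x_n\sum_j B_j x_{n-j}$, hence $x_n x_{n-j}\to 0$ and a vanishing averaging defect) and the spectral classification of bounded bi-infinite limit solutions are correct additions. But the proof is incomplete at precisely the decisive point, and you say so yourself: you never show $\liminf_m x_{dm+r}=S_r$. The interlacing plan you sketch has a structural problem: backward propagation of highs produces, along further subsequences, indices $n_b-\eta$ with $x_{n_b-\eta}\to S_r$, but these are tied to the high subsequence, while a would-be dip occurs at indices about which you know nothing; there is no mechanism forcing the two to coincide, and this alignment issue is exactly what you concede you cannot make quantitative. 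The paper's proof of Lemma~\ref{lemma:trichotomy1} resolves this by propagating in the opposite direction, where no alignment is needed. If $x_{dm_b+r}\to I_r<S_r$, then the sub-averaging inequality $x_n\le\frac{1}{A}\sum_i\beta_i x_{n-i}$, together with the fact that every class-$r$ value has limsup at most $S_r$, shows by induction that $\limsup_b x_{dm_b+r+\eta}<S_r$ for every $\eta$ in the additive semigroup generated by $I_\beta$ (each induction step needs only the single term $\beta_{i_N}$ pushed strictly below $S_r$; all the other terms are bounded by $S_r$ automatically, and only finitely many steps are ever needed, so the degrading factors $\beta/A$ are harmless). By the Frobenius number of $\{i/d : i\in I_\beta\}$, every sufficiently large multiple of $d$ lies in that semigroup, so far enough ahead of the dips an \emph{entire window} of class-$r$ terms has limsup $<S_r$, contradicting the fact that the monotone windowed maximum satisfies $y^{r}_{m}\ge S_r$ for every $m$. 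A single dip poisons a whole future window; that is why one propagates dips forward rather than highs backward.

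Ironically, your own machinery closes the gap faster than the plan you outline. Take the dip subsequence and let $y$ be a pointwise limit of the shifted sequences $n\mapsto x_{n+dm_b}$ (shifts by multiples of $d$ preserve residue classes). By your spectral argument $y$ is $d$-periodic, say with value $c_{r'}$ on class $r'$. But the windowed maximum of the shifted sequence over class $r'$ at any fixed window position equals $y^{r'}_{m+m_b}\to S_{r'}$, and by pointwise convergence it also tends to the windowed maximum of $y$, which is $c_{r'}$; hence $c_{r'}=S_{r'}$ for every class, while $c_{r}=\lim_b x_{dm_b+r}=I_r<S_r$, a contradiction. (Alternatively: your telescoping actually shows the averaging defect is \emph{summable}, so each class satisfies a linear recursion whose companion matrix is power bounded, with $1$ as its only peripheral eigenvalue and that eigenvalue simple, perturbed by a summable sequence; such iterations converge.) Either route would complete your argument; as written, it is not a proof.
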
 
\begin{proof}
For notational purposes let $\rho=\left\lfloor\frac{k}{gcd(I_{\beta})}\right\rfloor$. $$\mbox{Put }y^{a}_{m}=\max_{\ell=1,\dots, \rho}(x_{(m-\ell)gcd(I_{\beta})+a}).$$
Notice that
$$x_{(m-1)gcd(I_{\beta})+a}= \frac{\sum^{k}_{i=1}\beta_{i}x_{(m-1)gcd(I_{\beta})+a-i}}{A+\sum^{k}_{j=1}B_{j}x_{(m-1)gcd(I_{\beta})+a-j}}\leq$$
$$ \frac{\sum^{k}_{i=1}\beta_{i}x_{(m-1)gcd(I_{\beta})+a-i}}{A}\leq \max_{i\in I_{\beta}}(x_{(m-1)gcd(I_{\beta})+a-i})\leq y^{a}_{m-1}.$$
$$\mbox{So }y^{a}_{m}= \max_{\ell=1,\dots, \rho}(x_{(m-\ell)gcd(I_{\beta})+a}) = \max(x_{(m-1)gcd(I_{\beta})+a}, \max_{\ell=2,\dots, \rho}(x_{(m-\ell)gcd(I_{\beta})+a}))\leq y^{a}_{m-1}.$$
So $\{y^{a}_{m}\}^{\infty}_{m=1}$ is monotone decreasing and bounded below by zero for each $a$, thus $\{y^{a}_{m}\}^{\infty}_{m=1}$ converges for each $a$ to a limit, which we will call $y^{a}_{*}$. Now, we claim that each subsequence $\{x_{mgcd(I_{\beta})+a}\}^{\infty}_{m=1}$ must also converge to $y^{a}_{*}$. The definition of $y^{a}_{m}$  tells us that $x_{mgcd(I_{\beta})+a}\leq y^{a}_{m+1}$. Thus $\limsup x_{mgcd(I_{\beta})+a}\leq \limsup y^{a}_{m}= y^{a}_{*}$. Now suppose for the sake of contradiction that $\liminf x_{mgcd(I_{\beta})+a}= I_{a}< y^{a}_{*}$. Then there is a further subsequence $\{x_{m_{b}gcd(I_{\beta})+a}\}$ which converges to $I_{a}$. This implies that $\limsup x_{m_{b}gcd(I_{\beta})+a+\eta} < y^{a}_{*}$ for any $\eta\in \{\sum^{\nu}_{m=1}i_{m}|\nu\in\mathbb{N}$ and $i_{m}\in I_{\beta}$ for all
$m\in \mathbb{N}\}$. We prove this via induction on $\nu$. In the base case $\nu=1$, $\eta\in I_{\beta}$ and so 
$$\limsup x_{m_{b}gcd(I_{\beta})+a+\eta}\leq \frac{\sum^{k}_{i=1}\beta_{i}\limsup x_{m_{b}gcd(I_{\beta})+a+\eta-i}}{A} \leq y^{a}_{*} + \frac{\beta_{\eta}(I_{a}-y^{a}_{*})}{A}.$$ Assume that the result is true for all $\nu < N$. Then take $\eta= \sum^{N}_{m=1}i_{m}= i_{N}+ \sum^{\nu}_{m=1}i_{m}$ for some $i_{N}\in I_{\beta}$. So $$\limsup x_{m_{b}gcd(I_{\beta})+a+\eta}\leq \frac{\sum^{k}_{i=1}\beta_{i}\limsup x_{m_{b}gcd(I_{\beta})+a+\eta-i}}{A} \leq $$
$$y^{a}_{*} + \frac{\beta_{i_{N}}(\limsup x_{m_{b}gcd(I_{\beta})+a+\sum^{\nu}_{m=1}i_{m}}-y^{a}_{*})}{A}<y^{a}_{*}.$$ Let $N_{f}$ be the Frobenius number of the set $\{\frac{i}{gcd(I_{\beta})}|i\in I_{\beta}\}$, then $\limsup y^{a}_{m_{b}+N_{f}+k} < y^{a}_{*}$ by the properties of the Frobenius number which is a contradiction.
\end{proof}

\begin{lemma}\label{lemma:bounds}
If $x_{n}\geq \min_{i=1,\dots, k}(x_{n-i},c)$, where $c>0$. Then $x_{n}$ is bounded below by $\min_{j=1,\dots, k}(x_{N-j},c)$ for $n\geq N$. Moreover if $x_{n}\leq \max_{i=1,\dots, k}(x_{n-i},c)$, where $c>0$. Then $x_{n}$ is bounded above by $\max_{j=1,\dots, k}(x_{N-j},c)$ for $n\geq N$.
\end{lemma}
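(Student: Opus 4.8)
The plan is to prove both halves by a straightforward induction on $n$, exploiting the fact that the constant $c$ sitting inside the $\min$ (resp.\ $\max$) is precisely what makes the bound self-reproducing. I will describe the lower bound in detail; the upper bound is entirely dual.

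First I would fix $N$ and set $L=\min_{j=1,\dots,k}(x_{N-j},c)$. The two features of $L$ that I need are immediate from its definition as a minimum over a finite set: namely $L\leq c$, and $L\leq x_{N-j}$ for every $j\in\{1,\dots,k\}$. The first of these is the crucial one, since it is what lets me absorb the $c$-term in the recursive inequality.

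Next I would argue by induction that $x_n\geq L$ for all $n\geq N$. The $k$ terms $x_{N-1},\dots,x_{N-k}$ preceding index $N$ already satisfy $x_{N-j}\geq L$ by the defining property of $L$, and these serve as the base of the induction. For the inductive step, suppose $n\geq N$ and that $x_m\geq L$ has been established for all $m$ with $N-k\le m\le n-1$; in particular the $k$ immediately preceding terms satisfy $x_{n-1},\dots,x_{n-k}\geq L$. Applying the hypothesis at index $n$ and then monotonicity of the minimum gives
$$x_n\geq\min_{i=1,\dots,k}(x_{n-i},c)\geq\min(L,c)=L,$$
where the final equality uses $L\leq c$. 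This closes the induction and yields the claimed lower bound.

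For the upper bound I would set $U=\max_{j=1,\dots,k}(x_{N-j},c)$, observe that $U\geq c$ and $U\geq x_{N-j}$ for each $j$, and run the identical induction with every inequality reversed, using $\max(U,c)=U$ in place of $\min(L,c)=L$. I do not anticipate a genuine obstacle: the only points requiring care are the bookkeeping of an induction whose hypothesis must cover a lag window of width $k$, and the recognition that retaining $c$ inside the $\min$/$\max$ is exactly what prevents the bound from drifting downward (resp.\ upward) — without it an inequality such as $x_n\geq\min(x_{n-1},\dots,x_{n-k})$ would give no uniform lower bound at all.
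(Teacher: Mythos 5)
Your proof is correct and follows essentially the same route as the paper: a strong induction on $n$ in which the constant $c$, absorbed into the bound $L=\min_{j=1,\dots,k}(x_{N-j},c)$ via $L\leq c$, makes the bound self-reproducing. The only difference is bookkeeping — you fold the initial window $x_{N-k},\dots,x_{N-1}$ into the induction hypothesis at the outset, whereas the paper handles those terms by a case split inside the inductive step (indices $i\leq J-N$ covered by the hypothesis, indices $i>J-N$ rewritten as $N-\rho$) — which is a purely cosmetic distinction.
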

\begin{proof}
We will prove the first case, the second case follows similarly. We prove this via strong induction on $n$, the case $n=N$ provides the base case. Assume the result is true for $N\leq n < J$. Then $$x_{J}\geq \min_{i=1,\dots, k}(x_{J-i},c)\geq \min(\min_{J-N< i \leq k}(x_{J-i}),\min_{j=1,\dots, k}(x_{N-j},c),c)$$$$\geq \min(\min_{ 1\leq \rho \leq k-J+N}(x_{N-\rho}),\min_{j=1,\dots, k}(x_{N-j},c),c)\geq \min_{j=1,\dots, k}(x_{N-j},c).$$
The first inequality comes from the original recursive inequality in the statement of the lemma. The second inequality comes from the induction hypothesis. Indeed if $i\leq J-N$, then $N\leq J-i < J$ and so $x_{J-i}\geq \min_{j=1,\dots, k}(x_{N-j},c)$. The third inequality comes from the fact that if $i>J-N$, then we may write $J-i=N-\rho$ where $\rho=i- (J-N)$. 
\end{proof}

\begin{lemma}\label{lemma:trichotomy2}
Consider the $k^{th}$ order rational difference equation,
$$x_n=\frac{\sum^{k}_{i=1}\beta_{i}x_{n-i}}{A+\sum^{k}_{j=1}B_{j}x_{n-j}},\quad n\in\mathbb{N}.$$
Assume nonnegative parameters, nonnegative initial conditions, $A>0$ and $gcd(I_{\beta})$ does not divide $j$ for any $j\in I_{B}$. Choose initial conditions $x_{-m}$ so that $x_{-m}=0$ for all $-m \not \equiv 0 \mod \gcd(I_{\beta})$. Under this choice of initial conditions $$x_n=\frac{\sum^{k}_{i=1}\beta_{i}x_{n-i}}{A},\quad n\in\mathbb{N}.$$
\end{lemma}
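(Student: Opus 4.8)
The plan is to first establish a ``zero pattern'' forced by the special initial data, and then to use that pattern, together with the divisibility hypothesis on $I_B$, to annihilate the $B$-terms in the denominator. Throughout write $d=gcd(I_{\beta})$ for brevity. The target identity $x_n=\frac{\sum_{i}\beta_i x_{n-i}}{A}$ is equivalent to saying $x_n\sum_{j}B_j x_{n-j}=0$ after clearing denominators, so everything reduces to showing that this product vanishes at every step.

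First I would prove, by strong induction on $n$, the statement $P(n)$: if $n\not\equiv 0\pmod d$ then $x_n=0$, ranging over all indices (initial and computed). For the initial indices this is precisely the hypothesis that $x_{-m}=0$ whenever $-m\not\equiv 0\pmod d$, so the base case is free. For the inductive step, fix $n\in\mathbb{N}$ with $n\not\equiv 0\pmod d$ and assume $P$ for all smaller indices. Since $d=gcd(I_{\beta})$ divides every $i\in I_{\beta}$, each numerator term $x_{n-i}$ has index $n-i\equiv n\not\equiv 0\pmod d$, hence $x_{n-i}=0$ by the induction hypothesis. Thus the numerator $\sum_{i}\beta_i x_{n-i}$ vanishes, and since the denominator is at least $A>0$ we conclude $x_n=0$. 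Note this step needs no control of the denominator whatsoever: a vanishing numerator over a positive denominator already forces $x_n=0$.

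The second and decisive step is the observation that $x_n x_{n-j}=0$ for every $n\in\mathbb{N}$ and every $j\in I_B$. Indeed, if both factors were nonzero, then the contrapositive of $P$ (namely $x_\ell\neq 0\Rightarrow \ell\equiv 0\pmod d$) applied to $\ell=n$ and to $\ell=n-j$ would give $n\equiv 0$ and $n-j\equiv 0\pmod d$, whence $j=n-(n-j)\equiv 0\pmod d$, i.e. $d\mid j$. This contradicts the standing hypothesis that $gcd(I_{\beta})$ divides no element of $I_B$. Consequently $x_n\sum_{j}B_j x_{n-j}=\sum_{j}B_j\,x_n x_{n-j}=0$.

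Finally I would clear denominators in the defining recursion: multiplying by $A+\sum_{j}B_j x_{n-j}$ gives $x_n\left(A+\sum_{j}B_j x_{n-j}\right)=\sum_{i}\beta_i x_{n-i}$, and the vanishing just established collapses this to $A x_n=\sum_{i}\beta_i x_{n-i}$; dividing by $A>0$ yields the claimed identity. I expect the only delicate point to be the bookkeeping in the induction and in the pairwise-vanishing step: one must check that every index $n-i$ and $n-j$ is either an initial index (handled by the base case) or a strictly earlier computed term, and that the residue identity $n-i\equiv n$ genuinely rests on $d\mid i$ while the contradiction genuinely rests on $d\nmid j$. Once the zero pattern is in hand, the remainder is routine.
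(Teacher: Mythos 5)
Your proof is correct and follows essentially the same route as the paper's: the identical strong induction establishing the zero pattern ($x_n=0$ whenever $n\not\equiv 0 \pmod{\gcd(I_\beta)}$, for initial and computed indices alike), followed by the same use of the hypothesis that $\gcd(I_\beta)$ divides no element of $I_B$ to eliminate the denominator terms. The only difference is cosmetic packaging in the final step: the paper splits into the cases $n\equiv 0$ and $n\not\equiv 0 \pmod{\gcd(I_\beta)}$ (in the first, every $x_{n-j}$ with $j\in I_B$ vanishes so the denominator is exactly $A$; in the second, both sides of the claimed identity are zero), whereas you unify the two cases by observing $x_n x_{n-j}=0$ for all $j\in I_B$ and clearing denominators.
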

\begin{proof}
Using the initial conditions as the base case we may prove by induction that $x_{n}=0$ for all $n\not\equiv 0 \mod \gcd(I_{\beta})$. Suppose that the statement is true for all $n<N$. If $N\equiv 0 \mod \gcd(I_{\beta})$, then the statement is true for $N$ vacuously. If $N\not\equiv 0 \mod \gcd(I_{\beta})$, then $x_{N}= \frac{\sum^{k}_{i=1}\beta_{i}x_{N-i}}{A+\sum^{k}_{j=1}B_{j}x_{N-j}}=0$, since $N-i\not\equiv 0 \mod \gcd(I_{\beta})$ for all $i\in I_{\beta}$. $$\mbox{So clearly, } x_n=\frac{\sum^{k}_{i=1}\beta_{i}x_{n-i}}{A},\quad n\not\equiv 0 \mod \gcd(I_{\beta}).$$
Suppose $n\equiv 0 \mod \gcd(I_{\beta})$, since $gcd(I_{\beta})$ does not divide $j$ for any $j\in I_{B}$, $n-j\not\equiv 0 \mod \gcd(I_{\beta})$ for all $j\in I_{B}$. $$\mbox{So, } x_n=\frac{\sum^{k}_{i=1}\beta_{i}x_{n-i}}{A+\sum^{k}_{j=1}B_{j}x_{n-j}}=\frac{\sum^{k}_{i=1}\beta_{i}x_{n-i}}{A},\quad n\equiv 0 \mod \gcd(I_{\beta}).$$
\end{proof}

We now prove Theorem \ref{thm:trichotomy2}.
\begin{proof}
Case i. folllows from Lemma \ref{lemma:converge}.
Case ii.\newline
From Lemma \ref{lemma:trichotomy1} we see that every solution converges to a periodic solution of not necessarily prime period $\gcd(I_{\beta})$. From Lemma \ref{lemma:trichotomy2} we see that if we choose initial conditions so that $x_{-m}=1$ for all $-m \equiv 0 \mod \gcd(I_{\beta})$ and $x_{-m}=0$ for all $-m \not \equiv 0 \mod \gcd(I_{\beta})$, then this choice of initial conditions results in a periodic solution of prime period $\gcd(I_{\beta})$.\newline
Case iii.\newline
From Lemma \ref{lemma:trichotomy2} we see that if we choose initial conditions so that $x_{-m}=1$ for all $-m \equiv 0 \mod \gcd(I_{\beta})$ and $x_{-m}=0$ for all $-m \not \equiv 0 \mod \gcd(I_{\beta})$, then $$x_n=\frac{\sum^{k}_{i=1}\beta_{i}x_{n-i}}{A},\quad n\in\mathbb{N}.$$
 So, in this case with this choice of initial conditions our result follows quickly from results on recursive linear equations.\end{proof}

We now prove Theorem \ref{thm:trichotomy1}.
\begin{proof}
Case i. folllows from Lemma \ref{lemma:converge}. Case ii. \newline For notational purposes let $g=gcd(I_{\beta}\cup I_{B})$ and let $\rho = \left\lfloor \frac{k}{2g}\right\rfloor$. Put $$y^{a}_{m}=max_{\ell=1,\dots,\rho}\left(x_{2g(m-\ell)+a}, \frac{\alpha}{x_{2g(m-\ell)+a - g}\sum_{j\in I_{B}}B_{j}},\frac{\alpha}{x_{2gm+a-g}\sum_{j\in I_{B}}B_{j}}\right).$$ Then we have,
$$x_{2gm+a}\leq \frac{\alpha + \sum_{i\in I_{\beta}}\beta_{i}y^{a}_{m}}{A+\sum_{j\in I_{B}}B_{j}\frac{\alpha}{y^{a}_{m}\sum_{j\in I_{B}}B_{j}}}= y^{a}_{m},$$
since $A= \sum^{k}_{i=1}\beta_{i}$. Also, we have
$$x_{2gm+a+g}\geq \frac{\alpha + \sum_{i\in I_{\beta}}\beta_{i}\frac{\alpha}{y^{a}_{m}\sum_{j\in I_{B}}B_{j}}}{A+\sum_{j\in I_{B}}B_{j}y^{a}_{m}}=\frac{\alpha}{y^{a}_{m}\sum_{j\in I_{B}}B_{j}},$$
since $A=\sum^{k}_{i=1}\beta_{i}$. Thus $y^{a}_{m+1}\leq y^{a}_{m}$ for all $m,a\in\mathbb{N}$. So $\{y^{a}_{m}\}^{\infty}_{m=1}$ is monotone decreasing and bounded below by zero for each $a$, thus $\{y^{a}_{m}\}^{\infty}_{m=1}$ converges for each $a$ to a limit, which we will call $y^{a}_{*}$. Now, we claim that each subsequence $\{x_{2gm+a}\}^{\infty}_{m=1}$ must also converge to $y^{a}_{*}$. The definition of $y^{a}_{m}$  tells us that $x_{2gm+a}\leq y^{a}_{m+1}$. Thus $\limsup x_{2gm+a}\leq \limsup y^{a}_{m}= y^{a}_{*}$. Now suppose for the sake of contradiction that $\liminf x_{2gm+a}= I_{a}< y^{a}_{*}$. Then there is a further subsequence $\{x_{2gm_{b}+a}\}$ which converges to $I_{a}$. This implies that $\limsup x_{2gm_{b}+a} < y^{a}_{*}$. Now we need to make use of three recursive inequalities. Suppose $\limsup x_{2gm_{b}+a+c}<y^{a}_{*}$ then for any $\theta\in I_{B}$,
$$\liminf x_{2gm_{b}+a+c+\theta}\geq \frac{\alpha+\sum^{k}_{i=1}\beta_{i}\frac{\alpha}{y^{a}_{*}\sum_{j\in I_{B}}B_{j}}}{A+\sum_{j\in I_{B}}B_{j}\limsup x_{2gm_{b}+a+c+\theta-j}}> \frac{\alpha}{y^{a}_{*}\sum_{j\in I_{B}}B_{j}}.$$ Suppose $\liminf x_{2gm_{b}+a+c}>\frac{\alpha}{y^{a}_{*}\sum_{j\in I_{B}}B_{j}}$ then for any $\theta\in I_{B}$,
$$\limsup x_{2gm_{b}+a+c+\theta}\leq \frac{\alpha+\sum^{k}_{i=1}\beta_{i}y^{a}_{*}}{A+\sum_{j\in I_{B}}B_{j}\liminf x_{2gm_{b}+a+c+\theta-j}}< y^{a}_{*}.$$ Moreover, assume $\limsup x_{2gm_{b}+a+c}<y^{a}_{*}$ then for any $\eta\in I_{\beta}$,
$$\limsup x_{2gm_{b}+a+\eta}\leq \frac{\alpha + \sum^{k}_{i=1}\beta_{i}\limsup x_{2gm_{b}+a+\eta-i}}{A+\sum_{j\in I_{B}}B_{j}\frac{\alpha}{y^{a}_{*}\sum_{j\in I_{B}}B_{j}}} \leq y^{a}_{*} + \frac{\beta_{\eta}(I_{a}-y^{a}_{*})}{A+\sum_{j\in I_{B}}B_{j}\frac{\alpha}{y^{a}_{*}\sum_{j\in I_{B}}B_{j}}}< y^{a}_{*}.$$ So, using those three facts inductively we get $\limsup x_{2gm_{b}+a+\eta} < y^{a}_{*}$ for any $\eta\in \{\sum^{\nu}_{m=1}i_{m}+\sum^{\mu}_{m=1}j_{m}|\nu,\mu\in\mathbb{N}$, $\mu$ is even, $i_{m}\in I_{\beta}$, and $j_{m}\in I_{B}\}$. Moreover, $\limsup \frac{\alpha}{x_{2gm_{b}+a+\theta}\sum_{j\in I_{B}}B_{j}} < y^{a}_{*}$ for any $\theta\in \{\sum^{\nu}_{m=1}i_{m}+\sum^{\mu}_{m=1}j_{m}|\nu,\mu\in\mathbb{N}$, $\mu$ is odd, $i_{m}\in I_{\beta}$, and $j_{m}\in I_{B}\}$. Let $N_{f}$ be the Frobenius number of the set $\{\frac{i}{2g}|i\in I_{\beta}\}\cup\{\frac{i+j}{2g}|i,j\in I_{B}\}$, then $\limsup y^{a}_{m_{b}+N_{f}+k} < y^{a}_{*}$ by the properties of the Frobenius number which is a contradiction. We have just shown that each subsequence $\{x_{2gm+a}\}^{\infty}_{m=1}$ converges. Thus every solution converges to a periodic solution of not necessarily prime period  $2gcd(I_{\beta}\cup I_{B})$. Now, let us construct a periodic solution of prime period $2gcd(I_{\beta}\cup I_{B})$. Choose initial conditions so that $x_{-m}=\frac{\overline{x}}{2}$ for $-m\equiv 0 \mod 2gcd(I_{\beta}\cup I_{B})$, $x_{-m}=\frac{2\alpha}{\overline{x}\sum_{j\in I_{B}}B_{j}}$ for $-m\equiv gcd(I_{\beta}\cup I_{B}) \mod 2gcd(I_{\beta}\cup I_{B})$, and $x_{-m}=\overline{x}$ otherwise. These initial conditions give a periodic solution of prime period $2gcd(I_{\beta}\cup I_{B})$. We will prove this via strong induction on $n$ with the initial conditions providing the base case. Assume that $x_{n}=\frac{\overline{x}}{2}$ for $n\equiv 0 \mod 2gcd(I_{\beta}\cup I_{B})$, $x_{n}=\frac{2\alpha}{\overline{x}\sum_{j\in I_{B}}B_{j}}$ for $n\equiv gcd(I_{\beta}\cup I_{B}) \mod 2gcd(I_{\beta}\cup I_{B})$, and $x_{n}=\overline{x}$ otherwise for $n<N$. If $N\equiv 0 \mod 2gcd(I_{\beta}\cup I_{B})$, then since $2gcd(I_\beta \cup I_B)|i$ for all $i\in I_{\beta}$, $N-i\equiv 0 \mod 2gcd(I_{\beta}\cup I_{B})$ for all $i\in I_{\beta}$. Moreover, since $2gcd(I_\beta \cup I_B)|(j+gcd(I_\beta \cup I_B))$ for all $j\in I_B$, $N-j\equiv gcd(I_{\beta}\cup I_{B}) \mod 2gcd(I_{\beta}\cup I_{B})$ for all $j\in I_{B}$. So we have,
$$x_{N}=\frac{\alpha+\sum^{k}_{i=1}\beta_{i}x_{N-i}}{A+\sum^{k}_{j=1}B_{j}x_{N-j}}= \frac{\alpha+\sum^{k}_{i=1}\beta_{i}\frac{\overline{x}}{2}}{A+\sum^{k}_{j=1}B_{j}\frac{2\alpha}{\overline{x}\sum_{j\in I_{B}}B_{j}}}= \frac{2\alpha+\sum^{k}_{i=1}\beta_{i}\overline{x}}{2A+\frac{4\alpha}{\overline{x}}}=\frac{\overline{x}}{2},$$
since $\sum^{k}_{i=1}\beta_{i}=A$. If $N\equiv gcd(I_{\beta}\cup I_{B}) \mod 2gcd(I_{\beta}\cup I_{B})$, then since $2gcd(I_\beta \cup I_B)|i$ for all $i\in I_{\beta}$, $N-i\equiv gcd(I_{\beta}\cup I_{B}) \mod 2gcd(I_{\beta}\cup I_{B})$ for all $i\in I_{\beta}$. Moreover, since $2gcd(I_\beta \cup I_B)|(j+gcd(I_\beta \cup I_B))$ for all $j\in I_B$, $N-j\equiv 0 \mod 2gcd(I_{\beta}\cup I_{B})$ for all $j\in I_{B}$. So we have,
$$x_{N}=\frac{\alpha+\sum^{k}_{i=1}\beta_{i}x_{N-i}}{A+\sum^{k}_{j=1}B_{j}x_{N-j}}= \frac{\alpha+\sum^{k}_{i=1}\beta_{i}\frac{2\alpha}{\overline{x}\sum_{j\in I_{B}}B_{j}}}{A+\sum^{k}_{j=1}B_{j}\frac{\overline{x}}{2}}= \frac{2\alpha}{\overline{x}\sum_{j\in I_{B}}B_{j}},$$
since $\sum^{k}_{i=1}\beta_{i}=A$. If $N\not\equiv 0,gcd(I_{\beta}\cup I_{B}) \mod 2gcd(I_{\beta}\cup I_{B})$, then $N-i\not\equiv 0,gcd(I_{\beta}\cup I_{B}) \mod 2gcd(I_{\beta}\cup I_{B})$ for $i\in I_{\beta}\cup I_{B}$. Thus ,
$$x_{N}=\frac{\alpha+\sum^{k}_{i=1}\beta_{i}x_{N-i}}{A+\sum^{k}_{j=1}B_{j}x_{N-j}}=\frac{\alpha+\sum^{k}_{i=1}\beta_{i}\overline{x}}{A+\sum^{k}_{j=1}B_{j}\overline{x}}=\overline{x}.$$ Thus, we have demonstrated via induction that our choice of initial conditions gives a periodic solution of prime period $2gcd(I_{\beta}\cup I_{B})$. 
\newline Case iii. follows immediately from Theorem 1 of \cite{lp}.
\end{proof}

We now prove Theorem \ref{thm:trichotomy3}.
\begin{proof}
Suppose $\sum^{k}_{i=1}\beta_{2i}=0$, then the difference equation decouples into $\ell$ Riccati equations and the result quickly follows. So we may assume for the remainder of the proof that $\sum^{k}_{i=1}\beta_{2i}>0$.
If $\alpha+\sum^{k}_{i=1}\beta_{2i}\geq A$, then
$$x_{n}-1=\frac{\alpha-A+\sum^{k}_{i=1}\beta_{2i}x_{n-2i}}{A+x_{n-\ell}}=\frac{\sum^{k}_{i=1}\beta_{2i}+\alpha-A+\sum^{k}_{i=1}\beta_{2i}(x_{n-2i}-1)}{A+1+(x_{n-\ell}-1)}\;\; n\in\mathbb{N}.$$
So letting $w_{n}=x_{n}-1$,
\begin{equation}\label{eq:change1}
w_{n}=\frac{\sum^{k}_{i=1}\beta_{2i}+\alpha-A+\sum^{k}_{i=1}\beta_{2i}w_{n-2i}}{A+1+w_{n-\ell}}\;\; n\in\mathbb{N}.
\end{equation}
In the case $\alpha\geq A$ we have
$$x_{n}-1=\frac{\alpha-A+\sum^{k}_{i=1}\beta_{2i}x_{n-2i}}{A+x_{n-\ell}}\geq 0\;\; n\in\mathbb{N},$$ 
and the result follows immediately from Theorem \ref{thm:trichotomy1} after the change of variables. If $0<\alpha < A$, then let $r$ be the positive root of the equation
$$h(t)=t^{2}+(\sum^{k}_{i=1}\beta_{2i}+1-A)t-\alpha.$$
We have,
$$h(A)=A^{2}+(\sum^{k}_{i=1}\beta_{2i}+1-A)A-\alpha= \sum^{k}_{i=1}\beta_{2i}A+A-\alpha > 0\mbox{ and }h(0)=-\alpha<0,$$
so $r<A$. Thus, put $w_{n}=\frac{x_{n}+r}{1+r}$ and
$$w_{n}=\frac{\alpha+Ar+(r+1)x_{n-\ell}+\sum^{k}_{i=1}\beta_{2i}x_{n-2i}}{(1+r)(A+x_{n-\ell})}=\frac{\frac{\alpha+Ar-r^{2}-r-\sum^{k}_{i=1}\beta_{2i}r}{1+r}+(r+1)w_{n-\ell}+\sum^{k}_{i=1}\beta_{2i}w_{n-2i}}{A-r+(1+r)w_{n-\ell}}$$
$$=\frac{(r+1)w_{n-\ell}+\sum^{k}_{i=1}\beta_{2i}w_{n-2i}}{A-r+(1+r)w_{n-\ell}}\;\;n\in\mathbb{N}.$$
Thus in the case $0<\alpha<A$ the result follows immediately from the case $\alpha=0$ and in the case $\alpha\geq A$ the result follows immediately from Theorem \ref{thm:trichotomy1}. So we may assume without loss of generality that $A>0$ and $\alpha=0$ and we need only study the difference equation of the form 
\begin{equation}\label{eq:red}
x_n=\frac{\sum^{k}_{i=1}\beta_{2i}x_{n-2i}+x_{n-\ell}}{A+x_{n-\ell}},\quad n\in\mathbb{N}.
\end{equation}
  Suppose $0<\sum^{k}_{i=1}\beta_{2i}<A$, then by Lemma \ref{l:ineq},
$$x_{n}=\frac{\sum^{k}_{i=1}\beta_{2i}x_{n-2i}+x_{n-\ell}}{A+x_{n-\ell}}\leq \max\left(\frac{\sum^{k}_{i=1}\beta_{2i}x_{n-2i}}{A},1\right).$$
So the solution is bounded above by Lemma \ref{lemma:bounds}. Let $S=\limsup x_{n}$ and $I=\liminf x_{n}$. Then, $S\leq max(\frac{\sum^{k}_{i=1}\beta_{2i}}{A}S,1)$, which forces $S\leq \frac{A}{\sum^{k}_{i=1}\beta_{2i}}$. So the interval $[0,1]$ is an invariant attracting interval. On this interval the difference equation \eqref{eq:red} is increasing in all arguments. So we have,
$$I\geq\frac{(\sum^{k}_{i=1}\beta_{2i}+1)I}{A+I}\mbox{  and  }S\leq\frac{(\sum^{k}_{i=1}\beta_{2i}+1)S}{A+S}\mbox{. So,}$$
$$I^{2}\geq (\sum^{k}_{i=1}\beta_{2i}+1-A)I\mbox{  and  }S^{2}\leq (\sum^{k}_{i=1}\beta_{2i}+1-A)S.$$
Now, if $A\geq 1+\sum^{k}_{i=1}\beta_{2i}$, then this forces $S=0=I$. On the other hand, if $A< 1+\sum^{k}_{i=1}\beta_{2i}$, then put $\delta=A-\sum^{k}_{i=1}\beta_{2i}+\frac{1+\sum^{k}_{i=1}\beta_{2i}-A}{2}$. We may write, by Lemma \ref{l:ineq}, 
$$x_{n}=\frac{\sum^{k}_{i=1}\beta_{2i}x_{n-2i}+\delta x_{n-\ell}+ (1-\delta)x_{n-\ell}}{A+x_{n-\ell}}\geq \min\left(\frac{\sum^{k}_{i=1}\beta_{2i}x_{n-2i}+\delta x_{n-\ell}}{A},1-\delta\right).$$
Thus, since we have assumed positive initial conditions, the solution is bounded below by some $L>0$ by Lemma \ref{lemma:bounds}. So $S\leq \sum^{k}_{i=1}\beta_{2i}+1-A \leq I$, forcing $S=I$. Suppose $0<A<\sum^{k}_{i=1}\beta_{2i}$, then by Lemma \ref{l:ineq} we get,
$$x_n=\frac{\sum^{k}_{i=1}\beta_{2i}x_{n-2i}+x_{n-\ell}}{A+x_{n-\ell}}\geq \min\left(\frac{\sum^{k}_{i=1}\beta_{2i}x_{n-2i}}{A},1\right).$$
Thus, since we have assumed positive initial conditions, the solution is bounded below by some $L>0$ by Lemma \ref{lemma:bounds}. Let $I=\liminf x_{n}$ then either $I\geq \min(\frac{\sum^{k}_{i=1}\beta_{2i}}{A}I,1)$ forcing $I\geq 1$ or $\liminf x_{n}$ doesn't exist.  If $I\geq 1$, choose $0<\epsilon<1-\frac{A}{\sum^{k}_{i=1}\beta_{2i}}$ then for sufficiently large $n$,
$$x_n\geq\frac{\sum^{k}_{i=1}\beta_{2i}(I-\epsilon)+x_{n-\ell}}{A+x_{n-\ell}}> 1.$$
 In the case where $\liminf x_{n}$ doesn't exist clearly $x_{n}>1$ for sufficiently large $n$. So applying the change of variables in Equation \ref{eq:change1}, the result follows immediately from Theorem \ref{thm:trichotomy1}.
Suppose that $0<A=\sum^{k}_{i=1}\beta_{2i}$. In this case, we apply Lemma \ref{l:ineq} to get,
$$x_{n}=\frac{\sum^{k}_{i=1}\beta_{2i}x_{n-2i}+x_{n-\ell}}{A+x_{n-\ell}}\geq \min\left(\frac{\sum^{k}_{i=1}\beta_{2i}x_{n-2i}}{A},1\right)\geq  \min_{i=1,\dots, k}(x_{n-2i},1)\;\; n\in\mathbb{N}.$$ Thus, the solution is bounded below by the minimum of the number 1 and the initial conditions which were assumed to be positive. So, each solution has a lower bound $L>0$ which depends on the initial conditions. For a given solution with positive initial conditions the triangle inequality gives us,
$$|x_{n}-1|\leq\left|\frac{\sum^{k}_{i=1}\beta_{2i}(x_{n-2i}-1)}{\sum^{k}_{i=1}\beta_{2i}+L}\right|\leq \frac{\sum^{k}_{i=1}\beta_{2i}|x_{n-2i}-1|}{\sum^{k}_{i=1}\beta_{2i}+L}\;\; n\in\mathbb{N}.$$
Thus every solution with positive initial conditions converges to $1$.
\end{proof}
We finish this section by sketching a proof of the case where nonnegative initial conditions are allowed in Theorem \ref{thm:trichotomy3}.
\begin{thm} \label{thm:trichotomy4}
Consider the $k^{th}$ order rational difference equation,
\begin{equation} \label{eq:trichotomy4}
x_n=\frac{\alpha+\sum^{k}_{i=1}\beta_{2i}x_{n-2i}+x_{n-\ell}}{A+x_{n-\ell}},\quad n\in\mathbb{N}.
\end{equation}
Assume nonnegative parameters, nonnegative initial conditions so that the denominator is nonvanishing and that $\ell$ is odd. Under these
assumptions Equation \eqref{eq:trichotomy4} exhibits the following behavior.
\begin{enumerate}[i.]
\item When $A>\sum^{k}_{i=1}\beta_{2i}$, every solution converges to an equilibrium.
\item When $1+\sum^{k}_{i=1}\beta_{2i}\geq A+1 > \sum^{k}_{i=1}\beta_{2i}$ and $\alpha>0$, every solution converges to an equilibrium.
\item When $1+\sum^{k}_{i=1}\beta_{2i}\geq A+1 > \sum^{k}_{i=1}\beta_{2i}$ and $A=0$, every solution converges to an equilibrium.
\item When $1+\sum^{k}_{i=1}\beta_{2i}\geq A+1 > \sum^{k}_{i=1}\beta_{2i}$, $\alpha=0$ and $A>0$, every solution converges to a periodic solution of not necessarily prime period $gcd(I_{\beta})$, and there exist periodic solutions of prime period $gcd(I_{\beta})$.
\item When $A+1=\sum^{k}_{i=1}\beta_{2i}$, every solution converges to a periodic solution of not necessarily prime period $2gcd(I_{\beta})$, and there exist periodic solutions of prime period $2gcd(I_{\beta})$.
\item When $A+1 < \sum^{k}_{i=1}\beta_{2i}$, unbounded solutions exist for some choice of initial
conditions.
\end{enumerate}
\end{thm}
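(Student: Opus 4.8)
The plan is to mirror the reduction used in the proof of Theorem~\ref{thm:trichotomy3} and to isolate the single genuinely new phenomenon, namely the periodic cycles that zeros can sustain once nonnegative initial conditions are permitted. As there, I would first set aside the degenerate situation $\sum_{i=1}^{k}\beta_{2i}=0$, where the equation splits into $\ell$ independent Riccati equations. For the bulk of the argument I would dispose of every instance with $\alpha>0$ by the two affine changes of variable already appearing in that proof. When $\alpha\ge A$ the substitution $w_n=x_n-1$ gives $w_n\ge0$ for $n\in\mathbb N$ and produces Equation~\eqref{eq:change1}, whose divisibility hypotheses are automatically satisfied because the active numerator lags are even and the single denominator lag $\ell$ is odd; Theorem~\ref{thm:trichotomy1} then applies directly, since it already permits nonnegative data. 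When $0<\alpha<A$ the substitution $w_n=(x_n+r)/(1+r)$, with $r>0$ the positive root of $t^{2}+(\sum_{i=1}^{k}\beta_{2i}+1-A)t-\alpha$, has the crucial feature that $w_n\ge r/(1+r)>0$ even where $x_n=0$; thus it converts nonnegative data into strictly positive data for an instance of Equation~\eqref{eq:red}, to which Theorem~\ref{thm:trichotomy3} applies. Because the shifted iterates are bounded away from $0$, no zero-driven periodicity can occur, which is exactly why the $\alpha>0$ instances of cases i, ii, v and vi collapse onto the behavior governed by comparing $A+1$ with $\sum_{i=1}^{k}\beta_{2i}$. Finally, $A=0$ forces $x_n\ge 1$ for $n\ge 1$, so here too the solution is eventually strictly positive and case iii follows from Theorem~\ref{thm:trichotomy3}.

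This leaves $\alpha=0$, $A>0$, i.e.\ Equation~\eqref{eq:red} itself with nonnegative data, where I would argue directly. For $A$ large (the clean part of case i) I would use Lemma~\ref{l:ineq} and Lemma~\ref{lemma:bounds} to show $[0,1]$ is invariant and attracting, and then the $\limsup/\liminf$ inequalities $S\le(\sum_{i=1}^{k}\beta_{2i}+1)S/(A+S)$ together with its companion for $I$ force convergence to the zero equilibrium. For $A+1<\sum_{i=1}^{k}\beta_{2i}$ (case vi) I would show, as in Theorem~\ref{thm:trichotomy3}, that the lower bound produced by Lemma~\ref{lemma:bounds} drives $x_n>1$ eventually, after which $w_n=x_n-1$ and Theorem~\ref{thm:trichotomy1} yield the unbounded solutions. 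For the boundary $A+1=\sum_{i=1}^{k}\beta_{2i}$ (case v) I would combine the same eventual lower bound with the period-$2\gcd(I_\beta)$ conclusion of Theorem~\ref{thm:trichotomy1}, a period-$\gcd(I_\beta)$ cycle being in particular a (not necessarily prime) period-$2\gcd(I_\beta)$ solution.

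The heart of the theorem is case iv, $\sum_{i=1}^{k}\beta_{2i}-1<A\le\sum_{i=1}^{k}\beta_{2i}$ with $\alpha=0$ and $A>0$, where I would exhibit the periodic behavior explicitly. Since every active numerator lag $2i$ and the lag $\ell$ are divisible by $g:=\gcd(I_\beta)$, the choice $x_{-m}=0$ for $m\not\equiv 0\pmod g$ is self-sustaining: for $n\not\equiv 0\pmod g$ the numerator of Equation~\eqref{eq:red} vanishes, so $x_n=0$, while on the class $n\equiv 0\pmod g$ the surviving subsequence obeys a copy of Equation~\eqref{eq:red} with all lags divided by $g$, hence with $\gcd=1$, and its positive equilibrium $\sum_{i=1}^{k}\beta_{2i}+1-A$ furnishes a prime-period-$g$ solution. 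For the convergence statement I would split a general solution into the $g$ residue subsequences $\{x_{gm+a}\}_m$ and show each converges, using that the on-grid class reduces to the $\gcd=1$ equation, where the positive-lower-bound and contraction estimates of Theorem~\ref{thm:trichotomy3} apply, and that the off-grid classes, coupled only through the odd lag $\ell$, settle to their own limits.

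The main obstacle will be precisely this last step: once some but not all residue subsequences carry zeros, the uniform positive lower bound that powers the $\alpha=0$ arguments of Theorem~\ref{thm:trichotomy3} is unavailable, and one must instead track which residue classes vanish and verify that the coupling through the odd-lag term $x_{n-\ell}$ neither revives a vanished class nor destroys convergence of a surviving one. This is also the most delicate region to delimit, since for $A$ just above $\sum_{i=1}^{k}\beta_{2i}$ the positive equilibrium still exists and the same zero patterns persist, so the border between convergence to an equilibrium and convergence to a nontrivial $\gcd(I_\beta)$-cycle must be drawn with care.
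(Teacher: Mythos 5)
Your overall architecture is the paper's: dispose of $\alpha>0$ and $A=0$ by positivity (the paper does this in one line---if $\alpha>0$ or $A=0$ then $x_n>0$ for $n\geq k$, so Theorem \ref{thm:trichotomy3} applies to the shifted solution; your re-derivation of the two changes of variables from Theorem \ref{thm:trichotomy3}'s proof is harmless but redundant), and for $\alpha=0$, $A>0$ decompose modulo $g=\gcd(I_{\beta})$, with exactly the paper's prime-period-$g$ solution (the value $\sum^{k}_{i=1}\beta_{2i}+1-A$ on the class $0 \bmod g$ and zero elsewhere). The problem is that the step you flag as ``the main obstacle'' is a phantom, while the step that is actually needed is missing. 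Since $g$ divides every active lag, \emph{including} $\ell$---a fact you yourself use when you say the zero pattern is self-sustaining---the term $x_{n-\ell}$ lies in the same residue class as $x_n$: there is no coupling between classes at all. The equation decouples completely into $g$ independent copies of an equation again of the form \eqref{eq:trichotomy3} with $\alpha=0$ (the lags $2i/g$ are even because $g$ is odd, and $\ell/g$ is odd); this total decoupling is precisely what the paper invokes Remark 1 of \cite{f2} for. No class can be ``revived'' by, or disturb the convergence of, another. What genuinely requires an argument, and what your proposal never supplies, is how to apply Theorem \ref{thm:trichotomy3} to a class whose own initial data mix zeros and positive values, since that theorem demands \emph{positive} initial conditions. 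The paper fills this hole with a Frobenius-number induction: if $x_n>0$ then $x_{n+i}>0$ for every $i\in I_{\beta}$, so each residue class mod $g$ is either identically zero or eventually positive, and Theorem \ref{thm:trichotomy3} is then applied to the eventually positive classes after a shift. The same classwise treatment is what covers cases v and vi when $\alpha=0$ and $A>0$; your lower-bound arguments there tacitly assume positive initial conditions and so do not handle solutions carrying zero classes.

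Your closing remark about the border of case i, on the other hand, is a genuine observation and not merely a defect of your write-up: when $\alpha=0$, $A>0$, $g>1$ and $\sum^{k}_{i=1}\beta_{2i}<A<\sum^{k}_{i=1}\beta_{2i}+1$, the positive equilibrium $\sum^{k}_{i=1}\beta_{2i}+1-A$ of each decoupled copy still exists, so a solution that is identically zero on some classes and positive on others converges to a nonconstant cycle of period $g$ rather than to an equilibrium, which conflicts with case i as stated. The paper's proof does not confront this either---it asserts that applying Theorem \ref{thm:trichotomy3} classwise ``yields all parts of the result''---so you have put your finger on a real weak point; but your proposal only names the difficulty and offers no resolution, so it cannot be counted as a proof of the statement as it stands.
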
 
\begin{proof}
If $\alpha>0$ or $A=0$ then $x_{n}>0$ for $n\geq k$, so the result follows from Theorem \ref{thm:trichotomy3}. If $\alpha=0$ and $A>0$, then we have the following recursive property. If $x_{n}>0$, then $x_{n+i}>0$ for all $i\in I_{\beta}$. Let $N_{f}$ be the Frobenius number of the set $\{\frac{i}{gcd(I_{\beta})}|i\in I_{\beta}\}$. Induction and the properties of the Frobenius number yield that if $x_{n}>0$, then $x_{n+gcd(I_{\beta})m}>0$ for all $m\geq N_{f}$. Thus each of the subsequences $\{x_{mgcd(I_{\beta})+a}\}^{\infty}_{m=1}$ are either identically zero or eventually positive. After decoupling the difference equation using Remark 1 in \cite{f2}, we may apply Theorem \ref{thm:trichotomy3} to all of the eventually positive sequences and this yields all parts of the result, except for the construction of a periodic solution of prime period $gcd(I_{\beta})$ in case iv. To construct such a periodic solution choose initial conditions $x_{-m}=\sum^{k}_{i=1}\beta_{i}-A$ if $-m\equiv 0 \mod gcd(I_{\beta})$ and $x_{-m}=0$ otherwise. After decoupling the difference equation using Remark 1 in \cite{f2}, we see that this is indeed a periodic solution since $\sum^{k}_{i=1}\beta_{i}-A$ and $0$ are both equilibria of the reduced equation in this case. 
\end{proof} 
\section{Conclusion}
The reader should keep in mind that several periodic trichotomy conjectures in \cite{cl}, which have not yet been established, do not fit into the three major families laid out in this article. When established, these special cases may be the prototypical examples for additional general families of periodic trichotomies. The reader should be careful with any attempt to generalize Theorem \ref{thm:trichotomy3} since equations with additional odd delays tend to exhibit chaos in a range of the parameters.
Finally, in case it is still unclear that Theorem \ref{thm:trichotomy3} covers new ground the reader should notice that Theorem \ref{thm:trichotomy3} gives a periodic trichotomy result for the previously unknown special case,
$$x_{n}=\frac{\alpha + \gamma x_{n-2}+\epsilon x_{n-4}+x_{n-7}}{A+x_{n-7}}.$$

\par\vspace{0.5 cm}

\end{document}